\documentclass[11pt,a4paper]{amsart}
\usepackage[T1]{fontenc}

\usepackage[initials]{amsrefs}
\usepackage{amssymb}
\usepackage{amsthm}
\usepackage{url}
\usepackage[all]{xy}

\numberwithin{equation}{section}
\numberwithin{table}{section}

\newcommand{\D}{{\mathbb D}}
\newcommand{\F}{{\mathbb F}}
\newcommand{\N}{{\mathbb N}}
\newcommand{\cG}{{\mathcal G}}
\newcommand{\cA}{{\mathcal A}}
\newcommand{\Ret}{\operatorname{Ret}}
\newcommand{\Sym}{\operatorname{Sym}}
\newcommand{\Fun}{\operatorname{Fun}}
\newcommand{\id}{\operatorname{id}}
\newcommand{\mpl}{\operatorname{mpl}}

\theoremstyle{plain}
\newtheorem{thm}{Theorem}[section]
\newtheorem{lem}[thm]{Lemma}
\newtheorem{rem}[thm]{Remark}
\newtheorem{pro}[thm]{Proposition}

\newtheorem{notation}[thm]{Notation}
\newtheorem{convention}[thm]{Convention}
\newtheorem*{conjecture*}{Conjecture}
\newtheorem{cor}[thm]{Corollary}
\newtheorem{example}[thm]{Example}

\newtheorem{defn}[thm]{Definition}

\begin{document}
\title[Extensions of set-theoretic solutions]{Extensions of
set-theoretic solutions of the Yang--Baxter equation and a conjecture of Gateva-Ivanova}
\author{L. Vendramin}

\thanks{This work was partially supported by CONICET, PICT-2014-1376 and ICTP}

\address{Depto. de Matem\'atica, FCEN,
Universidad de Buenos Aires, Pabell\'on I, Ciudad Universitaria (1428)
Buenos Aires, Argentina}
\email{lvendramin@dm.uba.ar}

\subjclass[2010]{16T25}

\dedicatory{A Dino}

\begin{abstract}
	We develop a theory of extensions for involutive and nondegenerate solutions
	of the set-theoretic Yang--Baxter equation and use it to produce new families
	of solutions. As an application we construct an infinite family of
	counterexamples to a conjecture of Gateva-Ivanova related to the
	retractability of square-free solutions. 
\end{abstract}

\maketitle

\section*{Introduction}

The Yang--Baxter equation is one of the basic equations in
mathematical-physics. For that reason, in \cite{MR1183474} Drinfeld posed the
question of finding set-theoretic solutions, i.e. bijective maps 
$r\colon
X\times X\to X\times X$, where $X$ is a set, satisfying 
\[
	(r\times\id)(\id\times r)(r\times\id)=(\id\times r)(r\times\id)(\id\times r).
\]

Nondegenerate involutive solutions have received a lot of attention.  
Nondegenerate means that if one writes
\[
	r(x,y)=(\sigma_x(y),\tau_y(x)),\quad
	x,y\in X,
\]
the maps $\sigma_x$ and $\tau_x$ are bijective for each $x\in X$. Involutive
means that $r^2=\id_{X\times X}$. 

The first
results were obtained in the seminal works of Etingof,
Schedler and Soloviev \cite{MR1722951} and Gateva-Ivanova and Van den Bergh 
\cite{MR1637256}.  The theory of involutive solutions was further developed in
several papers such as \cite{MR2095675,MR2927367}, \cite{MR2132760,MR2278047},
\cite{MR2885602}, \cite{MR2584610}, \cite{MR2383056,MR2368074}, 
\cite{MR2652212,MR3177933}, \cite{MR3090121} 
and \cite{MR3190423}. 
Most of these papers focus on the so-called square-free solutions, i.e.
solutions satisfying $r(x,x)=(x,x)$ for all $x,y\in X$. Square-free solutions
have several links with other topics (semigroups of I-type, semigroups of skew
polynomial type, Bieberbach groups, quadratic algebras, etc.) and remarkable
results were obtained. However, several important questions are still open. One
of the main challenges is to construct new families of solutions. To understand
how to construct new solutions, 
Etingof, Schedler and Soloviev considered an equivalence relation  
on $X$ which induces a natural involutive solution
$\Ret(X,r)$, the so-called retraction of $(X,r)$. An
involutive solution $(X,r)$ is then called a multipermutation solution of level $m$ if
there exists a positive integer $m$ such that $\Ret^m(X,r)$ has only one
element, where the iterated retractions are defined inductively as 
$\Ret^{k}(X,r)=\Ret(\Ret^{k-1}(X,r))$ for $k>1$.

Multipermutation solutions has been intensively studied, see for example
\cite{MR2652212,MR3177933} and \cite{MR2885602}. In this context, one of the
most important unsolved problems is the following:

\begin{conjecture*}[Gateva-Ivanova]
	Every finite nondegenerate involutive square-free set-theoretic solution
	$r:X\times X\to X\times X$ such that $|X|\geq2$ is a multipermutation
	solution.
\end{conjecture*}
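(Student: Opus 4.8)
The plan is not to prove this statement but to \emph{refute} it. The abstract announces an infinite family of counterexamples, so the conjecture as stated is false; the work to be done is to construct, for infinitely many values of $n$, a finite nondegenerate involutive square-free solution $(X,r)$ with $|X|=n\geq 2$ that is \emph{not} a multipermutation solution.

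First I would reformulate the target in terms of retraction. By definition $(X,r)$ is a multipermutation solution exactly when the descending sequence $\Ret^k(X,r)$ reaches a one-point solution. Writing $x\sim y$ whenever $\sigma_x=\sigma_y$, the retraction identifies precisely the $\sim$-classes, so $(X,r)$ fails to be a multipermutation solution iff the chain $(X,r),\Ret(X,r),\Ret^2(X,r),\dots$ stabilizes at a solution of cardinality $>1$. The sharpest instance is an \emph{irretractable} solution, one in which $x\mapsto\sigma_x$ is injective so that $\Ret(X,r)=(X,r)$. Hence it suffices to produce a single finite square-free irretractable solution with $|X|\geq 2$, and then to promote it to an infinite family.

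The construction itself I would carry out through the extension theory developed in the body of the paper. The idea is to mimic the theory of group extensions: starting from a base involutive nondegenerate solution and a system of fiber data (permutations together with a cocycle-type compatibility condition), assemble a new solution on a product-like set together with a projection onto the base that is a morphism of solutions and that realizes the base as the retraction. Choosing the base and the fiber data so that the resulting total solution is itself square-free and has injective $x\mapsto\sigma_x$ produces an irretractable square-free solution; letting one of the parameters range over an infinite set then yields the promised infinite family.

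The main obstacle is that the five requirements --- bijectivity of each $\sigma_x,\tau_x$ (nondegeneracy), $r^2=\id$ (involutivity), the Yang--Baxter equation, the square-free condition $r(x,x)=(x,x)$, and irretractability --- pull against one another, and the square-free condition is the genuinely restrictive one. It forces $\sigma_x(x)=x$ and $\tau_x(x)=x$ for every $x$, which tightly couples the family $\{\sigma_x\}$, and it is exactly this coupling that was long believed to force retractability; that belief is the content of the conjecture. The delicate point will therefore be to verify that the extension data can be chosen so that the square-free and nondegeneracy constraints hold on the whole total space while the maps $x\mapsto\sigma_x$ remain injective, i.e.\ so that the extension does not collapse under retraction. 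The extension machinery is precisely the device that makes it possible to thread this needle and to control the retraction of the constructed solution.
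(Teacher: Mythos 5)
Your overall strategy --- refute the conjecture by using the extension machinery to build a finite square-free irretractable solution and then propagate it to an infinite family --- is exactly the route the paper takes. But as written the proposal has a genuine gap: it never produces the object whose existence is the entire point. You correctly isolate the tension between square-freeness, nondegeneracy, and injectivity of $x\mapsto\sigma_x$, and you correctly say that resolving it is ``the delicate point,'' but you then stop there. The paper resolves it with a completely explicit dynamical cocycle $\alpha\colon Y\times Y\times S\to\Sym(S)$ on the two-element trivial cycle set $Y=\{1,2\}$ with $|S|=4$ (Example~\ref{exa:counterexample}), yielding an $8$-element square-free cycle set whose associated solution satisfies $\Ret(X,r)=(X,r)$; the verification that this particular $\alpha$ satisfies the cocycle condition~\eqref{eq:dynamical}, that the extension is square-free, and that the eight permutations $\psi_{x_1},\dots,\psi_{x_8}$ are pairwise distinct is the substance of the counterexample and cannot be waved through. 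Note also an internal inconsistency in your sketch: you describe the projection onto the base as ``realizing the base as the retraction,'' but if $\Ret(X,r)$ were the two-element trivial base then $(X,r)$ would be a multipermutation solution of level $\leq 2$; in the actual counterexample the covering map $X\to Y$ exists while the retraction does not move $X$ at all, and keeping these two quotients separate is essential.

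For the passage from one counterexample to infinitely many, your phrase ``letting one of the parameters range over an infinite set'' hides the second key ingredient. It is not enough to build larger extensions; one must know they remain non-multipermutation. The paper gets this from \cite[Lemma 4]{MR3177933} (a surjective homomorphism of solutions sends multipermutation solutions to multipermutation solutions), applied to the canonical projection $S\times_\alpha X\to X$: any square-free dynamical extension of a non-multipermutation square-free cycle set is again non-multipermutation (Lemma~\ref{lem:CJO}). Without identifying this lifting lemma, your plan would require re-verifying irretractability (or non-retractability in the limit) separately for every member of the family, which your proposal does not address.
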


The conjecture was made in 2004, see \cite[2.28(I)]{MR2095675}. 

The purpose of this note is to develop a theory of extensions of involutive
nondegenerate set-theoretic solutions of the Yang--Baxter equation.  To show
the strength of the theory, we construct a set-theoretical solution that proves
that  \cite[Conjecture 2.14]{MR2776789} is not true and also answers~\cite[Open
question 6.13(3)]{MR2885602}.  Another important application of our theory is
the construction of counterexamples to the Gateva-Ivanova conjecture.  One of
these counterexamples is (up to isomorphism) the set-theoretic solution
$(X,r)$, where $X=\{1,2,\dots,8\}$ and 
\[
	r(x,y)=(\varphi_x(y),\varphi_y(x)),
	\quad
	x,y\in X,
\]
with
\begin{align*}
	&\varphi_{1}=(57), && 
	\varphi_{2}=(68), &&
	\varphi_{3}=(26)(48)(57), &&
	\varphi_{4}=(15)(37)(68),\\
	&\varphi_{5}=(13), &&
	\varphi_{6}=(24), &&
	\varphi_{7}=(13)(26)(48), &&
	\varphi_{8}=(15)(24)(37).
\end{align*}
Remarkably, extensions of this solution produce other counterexamples to the
Gateva-Ivanova conjecture. 

\medskip
The paper is organized as follows. In Section~\ref{section:preliminaries} we
review the basic notions of the set-theoretic Yang--Baxter equation. This
section also includes the basics on cycle sets, which are nonassociative
algebraic structures in bijective correspondence with nondegenerate involutive
set-theoretic solutions. In Section~\ref{section:extensions} we develop the
theory of extensions of cycle sets. Although these extensions are often hard to
compute, they provide a very powerful tool to construct new solutions. Our main
theorem appears in this section and states that if $X\to Y$ is a surjective
homomorphism of cycle sets such that all the fibers have the same cardinality
then $X$ is an extension of $Y$. In Section~\ref{section:examples} several
examples of extensions are given. Among these examples one finds the semidirect
product of cycle sets and several classes of extensions that are relatively
easy to compute. We conclude the paper with counterexamples to the
Gateva-Ivanova conjecture.

\section{Preliminaries}
\label{section:preliminaries}

Recall that a pair $(X,r)$, where $X$ is a nonempty set and 
\[
	r\colon X\times X\to X\times X,\quad
	r(x,y)=(\sigma_x(y),\tau_y(x)),\quad x,y\in X,
\]
is a bijective map, is a \emph{set-theoretic solution} of the Yang--Baxter
equation if 
\[
	r_{12}r_{23}r_{12}=r_{23}r_{12}r_{23},
\]
where $r_{12}=r\times\id$ and $r_{23}=\id\times r$. The map $r$ is
\emph{involutive} if $r^2=\id_{X\times X}$, it is \emph{nondegenerate} if the
maps $\sigma_x$ and $\tau_x$ are bijective for each $x\in X$, and it is
\emph{square-free} if $r(x,x)=(x,x)$ for all $x\in X$. 

A \emph{homomorphism} between the set-theoretic solutions $(X,r_X)$ and
$(Y,r_Y)$ is a map $f\colon X\to Y$ such that
 \[
 \xymatrix{
 X\times X
 \ar[d]_{f\times f}
 \ar[r]^-{r_X}
 & X\times X
 \ar[d]^{f\times f}
 \\
 Y\times Y
 \ar[r]^-{r_Y}
 & Y\times Y
 }
 \]
 is commutative. An \emph{isomorphism} of solutions is just a bijective
 homomorphism of solutions.

\begin{convention}
	By a \emph{solution} (of the Yang--Baxter equation) we mean a
	nondegenerate involutive set-theoretic solution of the Yang--Baxter
	equation. We consider only finite solutions.
\end{convention}

The \emph{retract relation} on a solution $(X,r)$ was first considered in
\cite{MR1722951} and it is defined by $x\sim y$ if and only if
$\sigma_x=\sigma_y$, $x,y\in X$. The \emph{retraction} of $(X,r)$ is the
solution $\Ret(X,r)=(X/{\sim},\overline{r})$ with 
\[
\overline{r}([x],[y])=([\sigma_x(y)],[\tau_y(x)]),
\]
where $[x]$ denotes the equivalence class of $x$.  A solution $(X,r)$ is called
a \emph{multipermutation solution} of level $\mpl X=m$ if there exists $m>0$ such that
$\Ret^m(X,r)$ has cardinality one, where 
\begin{align*}
	&\Ret^{k}(X,r)=\Ret(\Ret^{k-1}(X,r)),\quad k>0.
\end{align*}

The \emph{Yang--Baxter permutation group} $\cG(X,r)$ of a finite solution
$(X,r)$ is the subgroup of the symmetric group on $X$ generated by
$\{\sigma_x:x\in X\}$. 

\begin{example}
	The map 
	\[
	r(x,y)=(\sigma_x(y),\tau_y(x)),\quad 
	x,y\in\{1,2,3,4\},
	\]
	where 
	\begin{align*}
		&\sigma_1=(34), &&\sigma_2=(1324), && \sigma_3=(1423),&& \sigma_4=(12),\\
		&\tau_1=(24), && \tau_2=(1432), &&  \tau_3=(1234),&& \tau_4=(13),
	\end{align*}
	is a solution of the Yang--Baxter equation. It is not square-free and it is
	not a multipermutation solution. Moreover, 
	$\cG(X,r)\simeq\D_4$, where $\D_4$ is the dihedral group of eight elements.
\end{example}

To study solutions of the Yang--Baxter equation,  Rump introduced what is known
as cycle sets \cite{MR2132760}.  A \emph{cycle set} is a set $X$ with a binary
operation $X\times X\to X$, $(x,y)\mapsto x\cdot y$, such that the maps
$\varphi_x\colon y\mapsto x\cdot y$ are bijective for each $x\in X$ and
\[
		(x\cdot y)\cdot (x\cdot z)=(y\cdot x)\cdot (y\cdot z)
\]
for all $x,y,z\in X$.  
To describe a finite cycle set $X$, without loss of
generality we may assume that $X=\{1,2,\dots,n\}$ for some $n\in\N$ and then
write the sequence of permutations $\varphi_1,\varphi_2,\dots,\varphi_n$.

Cycle sets are in bijective correspondence with involutive nondegenerate
solutions of the Yang--Baxter equation. The correspondence is given by
\begin{equation}
	\label{eq:correspondence}
	r(x,y)=((y*x)\cdot y,y*x),\quad x,y\in X,
\end{equation}
where $y*x=z$ if and only if $x=y\cdot z$, see \cite[Prop. 1]{MR2132760}.
From the correspondence~\eqref{eq:correspondence} one obtains immediately that $r$ is square-free
if and only if $x\cdot x=x$ for all $x\in X$. Thus we say that a cycle set $X$ is \emph{square-free} 
if $x\cdot x=x$ for all $x\in X$.

A \emph{homomorphism} between the cycle sets $X$ and $Z$ is a map $f\colon
X\to Z$ such that $f(x\cdot y)=f(x)\cdot f(y)$ for all $x,y\in X$. As usual, an
\emph{isomorphism} of cycle sets is just a bijective homomorphism of cycle sets.

\begin{example}
	The square-free solution over $X=\{1,2,3\}$ given by the map
	$r\colon X\times X\to X\times X$, where  
	\begin{align*} 
		&r(1,1)=(1,1), && r(1,2)=(2,1), && r(1,3)=(3,2),\\
		&r(2,1)=(1,2), && r(2,2)=(2,2), && r(2,3)=(3,1),\\
		&r(3,1)=(2,3), && r(3,2)=(1,3), && r(3,3)=(3,3),
	\end{align*}
	has permutations
	\[
		\sigma_1=\sigma_2=\tau_1=\tau_2=\id,\quad
		\sigma_3=\tau_3=(12). 
	\]
	This solution corresponds to the square-free cycle set $X=\{1,2,3\}$ given by
	the permutations $\varphi_1=\varphi_2=\id$ and $\varphi_3=(12)$. 
\end{example}

\begin{notation} 
	For a nonempty set $S$ we write $\Fun(S,S)$ to denote set of maps $S\to S$
	and $\Sym(S)$ to denote the set of permutations on $S$. We usually write 
	$f(-)$ to denote the map $t\mapsto f(t)$.  We write $|X|$ to denote the cardinality
	of the set $X$.  
\end{notation}

\section{Dynamical extensions}
\label{section:extensions}

Inspired by the theory of extensions for racks and quandles
\cite[\S2]{MR1994219}, we develop a theory of dynamical extensions of finite
cycle sets. 

\begin{lem}
	\label{lem:dynamical}
	Let $X$ be a finite cycle set, $S$ be a finite nonempty set, and
	$\alpha\colon X\times X\times S\to\Fun(S,S)$,
	$(x,y,s)\mapsto\alpha_{x,y}(s,-)$, be a map.  Then $S\times X$ is a cycle set
	with respect to 
	\begin{equation}
		\label{eq:extension}
		(s,x)\cdot (t,y)=(\alpha_{x,y}(s,t),x\cdot y),\quad
   	x,y\in X,\;s,t\in S, 
	\end{equation}
	if and only if the maps $t\mapsto \alpha_{x,y}(s,t)$ are bijective for each 
	$x,y\in X$ and $s\in S$, and 
	\begin{equation}
		\label{eq:dynamical}
		\alpha_{x\cdot y,x\cdot z}(\alpha_{x,y}(r,s),\alpha_{x,z}(r,t))
		=\alpha_{y\cdot x,y\cdot z}(\alpha_{y,x}(s,r),\alpha_{y,z}(s,t))
	\end{equation}
	holds for all $x,y,z\in X$ and $r,s,t\in S$. 
\end{lem}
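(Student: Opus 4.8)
The plan is to verify directly the two defining properties of a cycle set for the operation~\eqref{eq:extension} on $S\times X$, translating each of them into one of the two stated conditions. Since $X$ is already assumed to be a cycle set, both the bijectivity of its left translations and its cycle set identity will be available to carry the corresponding statements on $S\times X$ down to the two conditions on $\alpha$.

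First I would handle the bijectivity of the left translations. Write $\varphi_{(s,x)}\colon S\times X\to S\times X$ for the map $(t,y)\mapsto(s,x)\cdot(t,y)=(\alpha_{x,y}(s,t),x\cdot y)$. Because $X$ is a cycle set, the map $\varphi_x\colon y\mapsto x\cdot y$ is a bijection of $X$, so the second coordinate of $\varphi_{(s,x)}$ already runs bijectively over $X$. Prescribing the second coordinate is the same as fixing $y$, and then $\varphi_{(s,x)}$ acts on the first coordinate by $t\mapsto\alpha_{x,y}(s,t)$. Hence $\varphi_{(s,x)}$ is a bijection of $S\times X$ for every $(s,x)$ precisely when $t\mapsto\alpha_{x,y}(s,t)$ is a bijection of $S$ for all $x,y\in X$ and $s\in S$, which is the first asserted condition.

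Next I would expand both sides of the cycle set identity
\[
	((s,x)\cdot(t,y))\cdot((s,x)\cdot(u,z))=((t,y)\cdot(s,x))\cdot((t,y)\cdot(u,z))
\]
for $(s,x),(t,y),(u,z)\in S\times X$ using~\eqref{eq:extension}. Applying the operation twice, the left-hand side becomes
\[
	\bigl(\alpha_{x\cdot y,\,x\cdot z}(\alpha_{x,y}(s,t),\alpha_{x,z}(s,u)),\,(x\cdot y)\cdot(x\cdot z)\bigr),
\]
while the right-hand side becomes
\[
	\bigl(\alpha_{y\cdot x,\,y\cdot z}(\alpha_{y,x}(t,s),\alpha_{y,z}(t,u)),\,(y\cdot x)\cdot(y\cdot z)\bigr).
\]
The two second coordinates coincide because $X$ itself satisfies $(x\cdot y)\cdot(x\cdot z)=(y\cdot x)\cdot(y\cdot z)$. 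Therefore the full identity on $S\times X$ holds for all arguments if and only if the two first coordinates agree for all $s,t,u\in S$ and $x,y,z\in X$; after renaming the triple $(s,t,u)$ to $(r,s,t)$ this is exactly~\eqref{eq:dynamical}.

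Combining the two steps shows that $S\times X$ equipped with~\eqref{eq:extension} is a cycle set if and only if both stated conditions hold. I do not expect a genuine obstacle here: the argument is a direct unwinding of the definitions, and the only point demanding care is the bookkeeping of the three $S$-variables, so that the quantified first-coordinate equation lines up verbatim with~\eqref{eq:dynamical}.
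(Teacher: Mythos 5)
Your proposal is correct and follows essentially the same route as the paper: the paper likewise reduces invertibility of the left translations to bijectivity of $t\mapsto\alpha_{x,y}(s,t)$ (recording the explicit inverse) and then expands both sides of the cycle set identity, matching second coordinates via the identity in $X$ and first coordinates via~\eqref{eq:dynamical}. No gaps.
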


\begin{proof}
	It is straightforward to check that the operation~\eqref{eq:extension} is
	invertible if and only if the maps $t\mapsto\alpha_{x,y}(s,t)$ are invertible
	for each $x,y\in X$ and $s\in S$.  Then 
	\[
	(r,x)\cdot (s,y)=(t,z)\iff
	(s,y)=(\alpha^{-1}_{x,x*z}(r,t),x*z),
	\]
	where $*$ denotes the inverse of the binary operation of the cycle set $X$. 

	Now for $(r,x),(s,y),(t,z)\in S\times X$ one computes
	\begin{align*}
		((r,x)\cdot (s,y))&\cdot ((r,x)\cdot (t,z))\\
		&= (\alpha_{x,y}(r,s),x\cdot y)\cdot (\alpha_{x,z}(r,t),x\cdot z)\\
		&=(\alpha_{x\cdot y,x\cdot z}(\alpha_{x,y}(r,s),\alpha_{x,z}(r,t)),(x\cdot y)\cdot (x\cdot z))
	\end{align*}
	and similarly
	\begin{multline*}
		((s,y)\cdot (r,x))\cdot ((s,y)\cdot (t,z))
		=(\alpha_{y\cdot x,y\cdot z}(\alpha_{y,x}(s,r),\alpha_{y,z}(s,t)),(y\cdot x)\cdot (y\cdot z)).
	\end{multline*}
	Then the lemma follows.
\end{proof}

\begin{defn}
	Let $X$ be a finite cycle set and $S$ a nonempty finite set.  A map
	$\alpha\colon X\times X\times S\to\Sym(S)$ satisfying~\eqref{eq:dynamical} is
	called a \emph{dynamical cocycle} of $X$ with values on $S$.  
\end{defn}

\begin{notation}
	We write $Z^2(X,S)$ to denote the set of dynamical cocycles of the finite
	cycle set $X$ with values in the finite set $S$, i.e.  
	\[
	Z^2(X,S)=\{\alpha\colon X\times X\times S\to\Fun(S,S):\alpha\text{ is a dynamical cocycle}\}.
	\]
\end{notation}

\begin{defn}
	Let $X$ be a finite cycle set, $S$ a nonempty finite set and $\alpha\in Z^2(X,S)$.
	The cycle set $S\times_\alpha X$ constructed by Lemma~\ref{lem:dynamical}
	is called a \emph{dynamical extension} of $X$ by $\alpha$. 
\end{defn}

\begin{example}
	Let $X$ be a finite cycle set $X$ and $S$ a finite nonempty set. The map
	$\alpha\colon X\times X\to\Sym(S)$ given by $\alpha_{x,y}(s,t)=t$ for all
	$x,y\in X$ and $s,t\in S$ is a dynamical cocycle of $X$. This is the
	\emph{trivial dynamical cocycle} of $X$. 
\end{example}

\begin{rem}
	\label{rem:dynamical} 
	If $X$ is a finite square-free cycle set, $S$ is a nonempty finite set and
	$\alpha\in Z^2(X,S)$, then $S\times_\alpha X$ is square-free if and only if 
	\begin{equation*}
		\begin{aligned}
		&\alpha_{x,x}(s,s)=s\quad
		\text{for all $x\in X$ and $s\in S$.}
		\end{aligned}
	\end{equation*}
\end{rem}

\begin{defn}
	We say that a homomorphism $p\colon X\to Y$ between the finite cycle sets $X$
	and $Y$ is a \emph{covering map} if it is surjective and all the fibers
	$p^{-1}(y)$, where $y\in Y$, have the same cardinality. 
\end{defn}

\begin{defn}
	A covering map $X\to Y$ is \emph{trivial} if either $|Y|=1$ or $|Y|=|X|$. 
\end{defn}

\begin{defn}
	A finite cycle set $X$ is \emph{simple} if $|X|>1$ and any covering map
	$X\to Y$ is trivial. 
\end{defn}

\begin{example}
	Every cycle set with a prime number of elements is simple. 
\end{example}

\begin{example}
	Let $X=\{1,2,3,4\}$ be the cycle set given by 
	\[
	\varphi_1=(14),\quad
	\varphi_2=(1342),\quad
	\varphi_3=(23),\quad
	\varphi_4=(1243).
	\]
	Let us prove that $X$ is simple. If $p\colon X\to Y$ is a covering map,
	then $|Y|=2$. Thus $Y=\{a,b\}$ with $a\ne b$. Since there
	are only two cycle sets with two elements, there are two cases to consider.  

	Suppose first that the cycle set structure over $Y$ is given by
	$\psi_a=\psi_b=\id$. Since $p$ is a cycle set homomorphism and
	$\psi_a=\psi_b=\id$, it follows that $p(x\cdot x)=p(x)\cdot p(x)=p(x)$ for
	all $x\in X$. But $x\cdot x\ne x$ for all $x\in X$ and hence  $p(x)=p(y)$
	for all $x,y\in X$, a contradiction.

	Now suppose that $\psi_a=\psi_b=(ab)$ and $p(1)=a$. From $1\cdot 1=4$ one
	obtains that $p(4)=b$ and hence $p(2)=p(4\cdot 1)=p(4)\cdot p(1)=b\cdot
	a=b$.  Then $a=a\cdot b=p(1)\cdot p(2)=p(1\cdot 2)=p(2)=b$, a
	contradiction.
\end{example}

\begin{thm}
	\label{thm:dynamical}
	Let $X$ be a finite cycle set and assume that $X$ admits a covering map
	$p:X\to Y$ onto a finite cycle set $Y$.  Then there exists 
	a finite nonempty set $S$ and a dynamical cocycle $\alpha\in Z^2(X,S)$
	such that $X$ is isomorphic to the dynamical extension $S\times_\alpha Y$.
\end{thm}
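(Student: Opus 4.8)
The plan is to reconstruct the cocycle $\alpha$ from the covering map $p\colon X\to Y$ by trivializing the fibers via a chosen family of bijections. Since all fibers $p^{-1}(y)$ have the same cardinality, fix a finite set $S$ with $|S|=|p^{-1}(y)|$ and, for each $y\in Y$, choose a bijection $\iota_y\colon S\to p^{-1}(y)$. This yields a bijection $\Phi\colon S\times Y\to X$, $\Phi(s,y)=\iota_y(s)$, and I will use $\Phi$ to transport the cycle set structure of $X$ back to $S\times Y$. The point is that $p$ being a homomorphism forces the second coordinate of any product in $X$ to be determined by $Y$: if $x\in p^{-1}(y)$ and $x'\in p^{-1}(y')$, then $p(x\cdot x')=p(x)\cdot p(x')=y\cdot y'$, so $x\cdot x'\in p^{-1}(y\cdot y')$. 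This is exactly the compatibility needed for the product to have the form~\eqref{eq:extension}.

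Concretely, first I would define $\alpha_{y,y'}(s,t)\in S$ by the requirement
\[
	\iota_y(s)\cdot\iota_{y'}(t)=\iota_{y\cdot y'}\bigl(\alpha_{y,y'}(s,t)\bigr),
\]
which is well-defined precisely because $\iota_y(s)\cdot\iota_{y'}(t)$ lands in the fiber $p^{-1}(y\cdot y')$, on which $\iota_{y\cdot y'}$ is a bijection. Next I would check that each map $t\mapsto\alpha_{y,y'}(s,t)$ is a bijection of $S$: this follows because $\varphi_{\iota_y(s)}$ is a bijection of $X$ that permutes the fibers according to $y'\mapsto y\cdot y'$, hence restricts to a bijection $p^{-1}(y')\to p^{-1}(y\cdot y')$, and conjugating by the chosen $\iota$'s gives a bijection of $S$. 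Thus $\alpha$ takes values in $\Sym(S)$.

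Then I would verify that $\alpha$ satisfies the dynamical cocycle condition~\eqref{eq:dynamical}. This is the heart of the argument, but it is not really a separate computation: the cycle set identity $(x\cdot y)\cdot(x\cdot z)=(y\cdot x)\cdot(y\cdot z)$ holds in $X$, and applying $\Phi^{-1}$ to both sides and comparing first coordinates yields exactly~\eqref{eq:dynamical}, while the second coordinates agree automatically because $Y$ is a cycle set. By Lemma~\ref{lem:dynamical}, $\alpha\in Z^2(Y,S)$ and $S\times_\alpha Y$ is a cycle set, and by construction $\Phi\colon S\times_\alpha Y\to X$ is a bijection intertwining the two operations, hence a cycle set isomorphism.

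The main obstacle I anticipate is purely bookkeeping rather than conceptual: one must be careful that the arbitrary choices of the bijections $\iota_y$ do not obstruct the verification, and in particular that the second-coordinate bookkeeping ($x\cdot y\in Y$ versus the fiberwise part in $S$) stays cleanly separated throughout. Once the key observation that $p$ is fiber-respecting is in place, the cocycle identity~\eqref{eq:dynamical} is forced term-by-term by the single cycle set axiom in $X$, so the only real work is transcribing that axiom through the trivialization $\Phi$ without sign or index errors.
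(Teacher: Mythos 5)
Your proposal is correct and follows essentially the same route as the paper: choose bijections trivializing the fibers (your $\iota_y$ are the inverses of the paper's $f_y$), define $\alpha_{y,y'}(s,t)=\iota_{y\cdot y'}^{-1}(\iota_y(s)\cdot\iota_{y'}(t))$, and transport the cycle set structure of $X$ through the resulting bijection $\Phi$, with Lemma~\ref{lem:dynamical} converting the cycle set axiom into the cocycle condition. You in fact spell out two points the paper leaves implicit (why the product lands in the fiber over $y\cdot y'$, and why $t\mapsto\alpha_{y,y'}(s,t)$ is bijective), so no gaps.
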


\begin{proof}
	Since all the fibers $p^{-1}(y)$ have the same
	cardinality, there is a finite
	nonempty set $S$ and there are bijections $f_y\colon p^{-1}(y)\to S$ for
	each $y\in Y$. Let $\alpha\colon Y\times Y\times S\to\Fun(S,S)$ be the map
	given by
	\[
		\alpha_{y,z}(s,t)=f_{y\cdot z}(f^{-1}_y(s)\cdot f^{-1}_{z}(t)),\quad
		s,t\in S,\;y,z\in Y. 
	\]
	Then $\alpha\in Z^2(Y,S)$ and hence  
	$S\times Y$ is a cycle set with respect to 
	\[
		(s,y)\cdot (t,z)=(\alpha_{y,z}(s,t),y\cdot z),\quad
		s,t\in S,\;y,z\in Y. 
	\]
	The map $\phi\colon X\to S\times Y$ given by $x\mapsto
	(f_{p(x)}(x),p(x))$ is a homomorphism of cycle sets. 
	Furthermore, $\phi$ is
	bijective since the map $\psi\colon S\times Y\to X$ given
	by $\psi(s,y)=f^{-1}_{y}(s)$ satisfy $\phi\circ\psi=\id_{S\times Y}$
	and $\psi\circ\phi=\id_X$. 
\end{proof}

\begin{cor}
	Let $X$ be a finite nonsimple cycle set.  Then there exists a finite cycle
	set $Y$ with $1<|Y|<|X|$, a finite nonempty set $S$ and a dynamical
	cocycle $\alpha\in Z^2(X,S)$ such that $X$ is isomorphic to $S\times_\alpha
	Y$.
\end{cor}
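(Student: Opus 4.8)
The plan is to deduce the corollary directly from Theorem~\ref{thm:dynamical} by first extracting a suitable covering map from the hypothesis that $X$ is nonsimple. By the definition of simplicity, saying that $X$ (with $|X|>1$) fails to be simple means precisely that some covering map $p\colon X\to Y$ is \emph{not} trivial. Recall that a covering map is trivial exactly when $|Y|=1$ or $|Y|=|X|$; thus the nontriviality of $p$ gives $|Y|\neq 1$ and $|Y|\neq|X|$. Since $p$ is surjective we have $|Y|\geq 1$, and since all fibers of $p$ have the same cardinality, say $k$, the fibers partition $X$ and so $|X|=k\,|Y|$, whence $|Y|$ divides $|X|$ and in particular $|Y|\leq|X|$. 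Combining these observations yields $1<|Y|<|X|$, which is exactly the size constraint demanded in the statement.

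Having produced a covering map $p\colon X\to Y$ with $1<|Y|<|X|$, I would then simply invoke Theorem~\ref{thm:dynamical}. That theorem, applied to $p$, yields a finite nonempty set $S$ together with a dynamical cocycle $\alpha$ (living in $Z^2(Y,S)$, as the explicit construction $\alpha_{y,z}(s,t)=f_{y\cdot z}(f^{-1}_y(s)\cdot f^{-1}_z(t))$ in the proof of the theorem makes clear) and an isomorphism of cycle sets $X\cong S\times_\alpha Y$. Since $Y$ already satisfies $1<|Y|<|X|$, this is precisely the assertion of the corollary, and the proof is complete.

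There is essentially no obstacle to overcome here: the entire mathematical content sits inside Theorem~\ref{thm:dynamical}, and the only task is the bookkeeping of unravelling the definition of \emph{nonsimple} into a covering map with $1<|Y|<|X|$. The single point that warrants a word of care is the divisibility observation $|Y|\mid|X|$, which forces $|Y|\leq|X|$ and hence the strict upper bound $|Y|<|X|$; this follows at once from the equal-fiber condition built into the definition of a covering map. (Implicit throughout is the standing assumption $|X|>1$, without which the term \emph{nonsimple} would also encompass the one-element cycle set, for which no $Y$ with $1<|Y|<|X|$ could possibly exist.)
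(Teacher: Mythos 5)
Your proposal is correct and follows exactly the paper's route: the paper's proof is the one-line remark that the corollary ``follows immediately from Theorem~\ref{thm:dynamical}'', and you have simply spelled out the bookkeeping (unwinding \emph{nonsimple} into a nontrivial covering map and using the equal-fiber condition to get $1<|Y|<|X|$). Your parenthetical observations that the cocycle actually lives in $Z^2(Y,S)$ and that the standing assumption $|X|>1$ is needed are accurate and, if anything, slightly more careful than the paper.
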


\begin{proof}
	If follows immediately from Theorem~\ref{thm:dynamical}.
\end{proof}

\begin{example}
	Let $X=\{1,\dots,6\}$ be the cycle set given by
	\begin{align*}
		\varphi_1=\varphi_2=(12)(34)(56),&&
		\varphi_3=\varphi_5=(12)(3654),&&
		\varphi_4=\varphi_6=(12)(3456),
	\end{align*}
	and let $Y=\{1,2\}$ be the cycle set given by
	\[
		\psi_1=\psi_2=(12).
	\]
	The map 
	\[
	p\colon X\to Y,\quad
	p(x)=\begin{cases}
		1 & \text{if $x$ is odd},\\
		2 & \text{otherwise},
	\end{cases}
	\]
	is a surjective cycle set homomorphism. Since the fibers $p^{-1}(1)$ and $p^{-1}(2)$
	have both three elements, Theorem~\ref{thm:dynamical} implies that there
	exists $S=\{a,b,c\}$ and $\alpha\in Z^2(X,S)$ such
	that $X\simeq S\times_\alpha Y$. The bijections
	\[
		f_1\colon 1\mapsto a,\;3\mapsto b,\;5\mapsto c,
		\quad
		f_2\colon 2\mapsto a,\;4\mapsto b,\;6\mapsto c,
	\]
	and a direct calculations yield
	\[
		\alpha_{x,y}(s,-)=\begin{cases}
			(bc) & \text{if $(x,y,s)\in\{(1,1,b),(1,1,c),(2,2,b),(2,2,c)\}$},\\
			\id & \text{otherwise}.
		\end{cases}
	\]
\end{example}

\begin{defn}
	Two dynamical cocycles $\alpha,\beta\in Z^2(X,S)$ are \emph{cohomologous} if
	there exists a map $\gamma\colon X\to\Sym(S)$, $x\mapsto\gamma_x$, such that
	\begin{equation}
		\label{eq:cohomologous}
		\gamma_{x\cdot y}\left(\alpha_{x,y}(\gamma^{-1}_x(s),\gamma^{-1}_{y}(t))\right)
		=\beta_{x,y}(s,t)
	\end{equation}
	for all $x,y\in X$ and $s,t\in S$. 
\end{defn}

\begin{pro}
	Let $X$ be a finite cycle set, $S$ a nonempty finite set and
	$\alpha,\beta\in Z^2(X,S)$. The following hold:
	\begin{enumerate}
		\item If $\alpha$ and $\beta$ are cohomologous by $\gamma$, then
			\begin{equation*}
				F\colon S\times_\alpha X\to S\times_\beta X,\quad
				(s,x)\mapsto (\gamma_x(s),x),
			\end{equation*}
			is a bijective cycle set homomorphism.
		\item If there is a bijective cycle set homomorphism $F\colon
			S\times_\alpha X\to S\times_\beta X$ such that $p_\beta\circ F=p_\alpha$,
			where $p_\alpha\colon S\times_\alpha X\to X$ and $p_\beta\colon
			S\times_\beta X\to X$ are the canonical surjections, then $\alpha$ and
			$\beta$ are cohomologous.
	\end{enumerate}
\end{pro}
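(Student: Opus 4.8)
The plan is to observe that both parts reduce to a single computation---expanding the defining identity of a cycle-set homomorphism between the two extensions---read in opposite directions. Throughout I would use that the operation on $S\times_\alpha X$ is $(s,x)\cdot(t,y)=(\alpha_{x,y}(s,t),x\cdot y)$ and the analogous formula with $\beta$ on $S\times_\beta X$.

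For part (1), I would compute the two sides of the homomorphism condition directly. On the one hand $F\bigl((s,x)\cdot(t,y)\bigr)=F\bigl(\alpha_{x,y}(s,t),x\cdot y\bigr)=\bigl(\gamma_{x\cdot y}(\alpha_{x,y}(s,t)),x\cdot y\bigr)$; on the other hand $F(s,x)\cdot F(t,y)=(\gamma_x(s),x)\cdot(\gamma_y(t),y)=\bigl(\beta_{x,y}(\gamma_x(s),\gamma_y(t)),x\cdot y\bigr)$. The second coordinates already agree, so $F$ is a homomorphism exactly when $\gamma_{x\cdot y}(\alpha_{x,y}(s,t))=\beta_{x,y}(\gamma_x(s),\gamma_y(t))$ holds for all $x,y\in X$ and $s,t\in S$. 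This is precisely the cohomology relation \eqref{eq:cohomologous}: starting from \eqref{eq:cohomologous} and substituting $s\mapsto\gamma_x(s)$, $t\mapsto\gamma_y(t)$ produces exactly this identity, since $\gamma_x^{-1}(\gamma_x(s))=s$. Bijectivity is then immediate, because each $\gamma_x\in\Sym(S)$ makes $(s,x)\mapsto(\gamma_x^{-1}(s),x)$ a two-sided inverse of $F$.

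For part (2), the first step is to extract the shape of $F$ from the compatibility $p_\beta\circ F=p_\alpha$. Since $p_\alpha$ and $p_\beta$ are the projections onto $X$, this condition says $F$ preserves the $X$-coordinate, so $F(s,x)=(\gamma_x(s),x)$ for some family of maps $\gamma_x\colon S\to S$. Because $F$ is bijective and fixes the second coordinate, each $\gamma_x$ must itself be a bijection, i.e. $\gamma_x\in\Sym(S)$, so $\gamma\colon X\to\Sym(S)$, $x\mapsto\gamma_x$, is well defined. I would then run the computation of part (1) in reverse: the homomorphism identity for $F$ yields $\gamma_{x\cdot y}(\alpha_{x,y}(s,t))=\beta_{x,y}(\gamma_x(s),\gamma_y(t))$, and substituting $s\mapsto\gamma_x^{-1}(s)$, $t\mapsto\gamma_y^{-1}(t)$ turns this into \eqref{eq:cohomologous}, exhibiting $\alpha$ and $\beta$ as cohomologous via $\gamma$.

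There is no serious obstacle: both directions are routine unwindings of the definitions, and the two computations are formally inverse to each other. The only points requiring mild care are the index bookkeeping in the substitution $s\mapsto\gamma_x^{-1}(s)$ so that it matches the stated form of \eqref{eq:cohomologous}, and, in part (2), the deduction that base-preservation together with bijectivity of $F$ forces each component $\gamma_x$ to lie in $\Sym(S)$ rather than merely in $\Fun(S,S)$.
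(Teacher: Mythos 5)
Your proposal is correct and follows essentially the same route as the paper's own proof: the same two direct computations of $F((s,x)\cdot(t,y))$ and $F(s,x)\cdot F(t,y)$, matched against the cohomology relation, with part (2) obtained by reading the identity in reverse after extracting $F(s,x)=(\gamma_x(s),x)$ from base-preservation. No gaps.
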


\begin{proof}
	We first prove (1). Since $\gamma_x\in\Sym(S)$ for all $x\in X$, the map $F$
	is bijective.  Let us prove that $F$ is a cycle set homomorphism:  For 
	$(s,x),(t,y)\in S\times X$ one computes 
	\begin{align}
		\label{eq:cohomologous:1}
		F((s,x)\cdot (t,y))
		=F(\alpha_{x,y}(s,t),x\cdot y)
		=(\gamma_{x\cdot y}(\alpha_{x,y}(s,t)),x\cdot y)
	\end{align}
	and similarly 
	\begin{align}
		\label{eq:cohomologous:2}
		F(s,x)\cdot F(t,y)
		=(\gamma_x(s),x)\cdot (\gamma_y(t),y)
		=(\beta_{x,y}(\gamma_x(s),\gamma_y(t)),x\cdot y).
	\end{align}
	Thus the claim follows from Equation~\eqref{eq:cohomologous}. 
	
	Now we prove (2). Since $F$ is bijective and $p_\beta\circ F=p_\alpha$, we
	may assume that $F(s,x)=(f(s,x),x)$ for some $f\colon S\times X\to S$. Then
	the maps $\gamma_x\colon S\to S$, $s\mapsto f(s,x)$, are bijective for each
	$x\in X$. Since $F$ is a cycle set homomorphism, the claim follows 
	from Equations~\eqref{eq:cohomologous:1} and~\eqref{eq:cohomologous:2}.
\end{proof}

\section{Examples}
\label{section:examples}

In this section we collect some examples of (dynamical) cocycles and
(dynamical) extensions of cycle sets. 

\begin{example}
	[Rump's semidirect product of cycle sets] 
	Let $X$ and $S$ be finite cycle sets and suppose that $X$ acts on $S$,
	which means that there is a map $X\times S\to S$, $(x,s)\mapsto xs$, such
	that 
	\begin{enumerate}
		\item $x(s\cdot t)=xs\cdot xt$ for all $x\in X$ and $s,t\in S$, 
		\item $(x\cdot y)xs=(y\cdot x)ys$ for all $x,y\in X$ and $s\in S$, and 
		\item The map $S\to S$, $s\mapsto xs$, is bijective for all $x\in X$.
	\end{enumerate}
	By \cite[Thm. 1]{MR2442072}, $S\times X$ is a cycle
	set with
	\[
		(s,x)\cdot (t,y)=((x\cdot y)s\cdot (y\cdot x)t,x\cdot y),\quad
		(s,x),(t,y)\in S\times X.
	\]
	From Lemma~\ref{lem:dynamical} one obtains that the map $\alpha\colon X\times
	X\times S\to\Fun(S,S)$ given by $\alpha_{x,y}(s,t)=(x\cdot y)s\cdot (y\cdot
	x)t$ is then a dynamical cocycle over $X$.
\end{example}

Let $\mathbf{n}_m$ be defined as the minimal integer so that there exists a
square-free multipermutation solution $X$ of size $\mathbf{n}_m$ and $\mpl
X=m$. In~\cite[Open question 6.13(3)]{MR2885602} it was asked if 
$\mathbf{n_m}=2^{m-1}+1$ holds for all $m\in\N$. 

\begin{example}
	\label{exa:GI}
        Let $Y=\{1,2,3\}$ be the cycle set given by $\varphi_1=\varphi_2=\id$,
        $\varphi_3=(12)$ and $S=\{a,b\}$ be a set with two elements. Write
        \[
                \cA=\{(1,2,a),(1,2,b),(1,3,b),(2,1,a),(2,1,b),(2,3,b)\}\subseteq X\times X\times S.
        \]
        The map
        $\alpha\colon Y\times Y\times S\to\Sym(S)$ given by
        \[
        \alpha_{x,y}(s,-)=\begin{cases}
                (ab) & \text{if $(x,y,s)\in\cA$},\\
                \id & \text{otherwise},
        \end{cases}
        \]
        is a dynamical cocycle of $Y$. Let 
        \[
                x_j=\begin{cases}
                        (a,j) & \text{if $j\in\{1,2,3\}$,}\\
                        (b,j-3) & \text{if $j\in\{4,5,6\}$.}
                \end{cases}
        \]
        The extension $X=S\times_\alpha Y$ is the cycle set $\{x_1,\dots,x_6\}$ given by
        \begin{gather*}
			\psi_{x_1}=(x_2x_5),\quad
			\psi_{x_2}=(x_1x_4),\quad
			\psi_{x_3}=(x_1x_2)(x_4x_5),\\
			\psi_{x_4}=(x_2x_5)(x_3x_6),\quad
			\psi_{x_5}=(x_1x_4)(x_3x_6),\quad
			\psi_{x_6}=(x_1x_2)(x_4x_5).
        \end{gather*}
		This cycle set corresponds to a square-free multipermutation solution of level
		four. This solution satisfies $4=\mpl X>\log_26$ and hence
		\cite[Conjecture 2.14]{MR2776789} is not true. This
		example also shows that $\mathbf{n}_4\leq 6<2^{3}+1$ and answers \cite[Open
		question 6.13(3)]{MR2885602}. 
\end{example}

\subsection{Constant cocycles}
A very interesting class of dynamical cocycles consists of the so-called
\emph{constant cocycles}. By definition, a
constant cocycle $\alpha$ can be written as $\alpha_{x,y}(s,t)=\beta_{x,y}(t)$
for some $\beta\colon X\times X\to\Sym(S)$. For constant cocycles the cocycle
condition~\eqref{eq:dynamical} is 
\[
	\beta_{x\cdot y,x\cdot z}\beta_{x,z}=\beta_{y\cdot x,y\cdot z}\beta_{y,z}\quad
\]
for all $x,y,z\in X$.

\begin{example}
	Let $X=\{1,2,3\}$ be the square-free cycle set given by 
	\[
	\varphi_1=\varphi_2=\id,\quad\varphi_3=(12),
	\]
	and let $S=\{a,b\}$ be a set with two elements.  
	The map $\beta\colon X\times X\to\Sym(S)$ given by
	\[
		\beta_{1,1}=\beta_{1,3}=\beta_{2,2}=\beta_{2,3}=\beta_{3,1}=\id,\quad
		\beta_{1,2}=\beta_{2,1}=\beta_{3,2}=\beta_{3,3}=(ab).
	\]
	is a constant cocycle of $X$. To compute the extension $S\times_{\beta}X$,
	let 
	\[
		x_j=\begin{cases}
			(a,j) & \text{if $j\in\{1,2,3\}$},\\
			(b,j-3) & \text{if $j\in\{4,5,6\}$}.
		\end{cases}
	\]
	Then $S\times_{\beta} X$ is the cycle set over $\{x_1,\dots,x_6\}$ given by
	\begin{gather*}
		\psi_{x_1}=\psi_{x_4}=(x_2x_5),\\
		\psi_{x_2}=\psi_{x_5}=(x_1x_4),\\
		\psi_{x_3}=\psi_{x_6}=(x_1x_2x_4x_5)(x_3x_6).
	\end{gather*}
\end{example}

\begin{example}
	\label{exa:CES}
	For a finite abelian group $A$ (written additively) we consider
	constant cocycles $\beta\colon X\times X\to\Sym(A)$ of the form
	\[
	\beta_{x,y}(b)=b+f(x,y),
	\]
	where $f\colon X\times X\to A$ is a map. By Lemma~\ref{lem:dynamical}, 
	$A\times X$ with 
	\[
	(a,x)\cdot (y,b)=(b+f(x,y),x\cdot y),\quad
	a,b\in A,\;x,y\in X,
	\]
	is a cycle set if and only if 
	\begin{equation}
		\label{eq:cocycle}
		f(x,z)+f(x\cdot y,x\cdot z)=f(y,z)+f(y\cdot x,y\cdot z)
	\end{equation} 
	for all $x,y,z\in X$. Such a map $f$ will be called a \emph{cocycle} of $X$ and it will be used
	to construct a cycle set structure over $A\times X$.
\end{example}

\begin{rem}
	The cocycles of Example~\ref{exa:CES} are related to the homology theory for
	set-theoretic solutions of the Yang--Baxter equation of Carter, Elhamdadi and
	Saito. This homology is useful to construct invariants of virtual knots, see
	for example~\cite[\S2]{MR2128041} and \cite{MR2515811}.
\end{rem}

Cocycles like those of Example~\ref{exa:CES} are often easy to compute. To
compute all the cocycles of a cycle set $X$ with values in, say, the finite
field with $q$ elements $A=\F_q$, one needs to solve a linear system (over
$\F_q$) with $|X|^3$ equations and $|X|^2$ unknowns. 

\begin{example}
	Let us write $\F_2=\{0,1\}$. Let 
	$X=\{1,2,3\}$ be the cycle set given by the permutations
	\[
	\varphi_1=\varphi_2=\id,\quad
	\varphi_3=(12),
	\]
	and $f\colon X\times
	X\to\F_2$ be the cocycle given by
	\[
		f(x,y)=\begin{cases}
			1 & \text{if $(x,y)=(3,2)$},\\
			0 & \text{otherwise}.
		\end{cases}
	\]
	For $j\in\{1,\dots,6\}$ let
	\[
		x_j=\begin{cases}
			(0,j) & \text{if $j\in\{1,2,3\}$},\\
			(1,j-3) & \text{if $j\in\{4,5,6\}$}.
		\end{cases}
	\]
	The extension of $X$ by $f$ is the cycle set structure over
	$\{x_1,\dots,x_6\}$ given by
	\[
	\psi_{x_1}=\psi_{x_2}=\psi_{x_4}=\psi_{x_5}=\id,\quad
	\psi_{x_3}=\psi_{x_6}=(x_1x_2x_4x_5).
	\]
\end{example}

\begin{example}
	Let $X=\{1,2,3,4,5\}$ be the cycle set defined by
	\[
		\varphi_1=\varphi_2=(45),\quad
		\varphi_3=(1425),\quad
		\varphi_4=\varphi_5=(12),
	\]
	and let $f\colon X\times X\to\F_2$ be the cocycle given by the matrix
	\[
	F=\begin{pmatrix}
		0 & 0 & 0 & 0 & 0\\
		0 & 0 & 0 & 0 & 0\\
		0 & 1 & 0 & 0 & 1\\
		1 & 1 & 0 & 0 & 0\\
		1 & 1 & 0 & 0 & 0\\
	\end{pmatrix}.
	\]
	(This means that $f(x,y)$ is the $(x,y)$-entry of the matrix $F$.) Let us
	compute the extension of $X$ by $f$. For $j\in\{1,\dots,10\}$ let 
	\[
	x_j=\begin{cases}
		(0,j) & \text{if $1\leq j\leq 5$},\\
		(1,j-5) & \text{if $6\leq j\leq 10$}.
	\end{cases}
	\]
	The extension of $X$ by $f$ is the cycle set over 
	$\{x_1,x_2,\dots,x_{10}\}$ given by 
	\begin{gather*}
		\psi_{x_1}=\psi_{x_2}=\psi_{x_6}=\psi_{x_7}=(x_4x_5)(x_9x_{10}),\\
		\psi_{x_3}=\psi_{x_8}=(x_1x_4x_2x_{10})(x_5x_6x_9x_7),\\
		\psi_{x_4}=\psi_{x_5}=\psi_{x_9}=\psi_{x_{10}}=(x_1x_7)(x_2x_6).
	\end{gather*}
\end{example}

\begin{example}
	Let us write $\F_3=\{0,1,2\}$. Let 
	$X=\{1,2,3,4\}$ be the cycle set given by 
	\[
	\varphi_1=\varphi_2=\varphi_3=\id,\quad
	\varphi_4=(23),
	\]
	and $f\colon X\times X\to\F_3$ be the cocycle given by
	\[
		f(x,y)=\begin{cases}
			2 & \text{if $x=y$ or $y=4$},\\
			1 & \text{if $(x,y)=(4,3)$},\\
			0 & \text{otherwise}.
		\end{cases}
	\]
	For $j\in\{1,\dots,12\}$ let 
	\[
	x_j=\begin{cases}
		(0,j) & \text{if $1\leq j\leq 4$},\\
		(1,j-4) & \text{if $5\leq j\leq 8$},\\
		(2,j-8) & \text{if $9\leq j\leq 12$}.
	\end{cases}
	\]
	The extension of $X$ by $f$ is the cycle set over
	$\{x_1,x_2,\dots,x_{12}\}$ given by 
	\begin{gather*}
        \psi_{x_1}=\psi_{x_5}=\psi_{x_9}=(x_1x_9x_5)(x_4x_{12}x_8),\\
        \psi_{x_2}=\psi_{x_6}=\psi_{x_{10}}=(x_2x_{10}x_6)(x_4x_{12}x_8),\\
        \psi_{x_3}=\psi_{x_7}=\psi_{x_{11}}=(x_3x_{11}x_7)(x_4x_{12}x_8),\\
		\psi_{x_4}=\psi_{x_8}=\psi_{x_{12}}=(x_2x_3x_6x_7x_{10}x_{11})(x_4x_{12}x_8).
	\end{gather*}
\end{example}

\subsection{Counterexamples to the Gateva-Ivanova conjecture}

We use a dynamical extension of the square-free solution over $\{1,2\}$ to
produce counterexamples to the conjecture.

\begin{example}
	\label{exa:counterexample}
	Let us consider the cycle set over the set $Y=\{1,2\}$ given by
	$\varphi_1=\varphi_2=\id$ and let $S=\{a,b,c,d\}$ be a set with four
	elements.  The map $\alpha\colon Y\times Y\times S\to\Sym(S)$ given by
	\begin{gather*}
		\alpha_{1,1}(a,-)=\alpha_{1,1}(b,-)=\alpha_{2,2}(a,-)=\alpha_{2,2}(b,-)=(cd),\\
		\alpha_{1,1}(c,-)=\alpha_{1,1}(d,-)=\alpha_{2,2}(c,-)=\alpha_{2,2}(d,-)=(ab),\\
		\alpha_{1,2}(a,-)=\alpha_{1,2}(c,-)=\alpha_{2,1}(a,-)=\alpha_{2,1}(c,-)=\id,\\
		\alpha_{1,2}(b,-)=\alpha_{1,2}(d,-)=\alpha_{2,1}(b,-)=\alpha_{2,1}(d,-)=(ac)(bd),
	\end{gather*}
	is a dynamical cocycle of $Y$.  
	For $j\in\{1,\dots,8\}$ let 
	\[
    x_j=\begin{cases}
        (a,j)    &  \text{if $1\leq j\leq 2$},\\
        (b,j-2)  &  \text{if $3\leq j\leq 4$},\\
        (c,j-4)  &  \text{if $5\leq j\leq 6$},\\
        (d,j-6)  &  \text{if $7\leq j\leq 8$}.
    \end{cases}
	\]
	Then $X=S\times_\alpha Y$ is the square-free cycle set over 
	$\{x_1,\dots,x_8\}$ given by 
	\begin{gather*}
		\psi_{x_1}=(x_5x_7),\\
		\psi_{x_2}=(x_6x_8),\\
		\psi_{x_3}=(x_2x_6)(x_4x_8)(x_5x_7),\\
		\psi_{x_4}=(x_1x_5)(x_3x_7)(x_6x_8),\\
		\psi_{x_5}=(x_1x_3),\\
		\psi_{x_6}=(x_2x_4),\\
		\psi_{x_7}=(x_1x_3)(x_2x_6)(x_4x_8),\\
		\psi_{x_8}=(x_1x_5)(x_2x_4)(x_3x_7).
	\end{gather*}
	Using the cycle set $X$ we construct a counterexample to 
	the Gateva-Ivanova conjecture: 
	Let
	\[
		r\colon X\times X\to X\times X,\quad
        r(x_i,x_j)=(\psi_{\psi_{x_j}^{-1}(x_i)}(x_j),\psi^{-1}_{x_j}(x_i)),\quad
		x_i,x_j\in X.
	\]
    Since $\psi_x^{-1}=\psi_x$ for all $x\in X$ and
    $\psi_{\psi_y(x)}(y)=\psi_x(y)$ for all $x,y\in X$, it follows that
    \[
    r(x_i,x_j)=(\psi_{x_i}(x_j),\psi_{x_j}(x_i)).
    \]
    The solution $(X,r)$ satisfies $\Ret(X,r)=(X,r)$ and hence 
    $(X,r)$ is a square-free
    solution which is not a multipermutation solution.
\end{example}

\begin{rem}
	In \cite[Cor. 2.9]{MR2652212} Ced\'o, Jespers and Okni{\'n}ski proved that
	every square-free solution with an abelian Yang--Baxter permutation group is
	a multipermutation solution.  The Yang--Baxter permutation group of 
	the solution $(X,r)$ of Example~\ref{exa:counterexample} is isomorphic to
	\begin{multline*}
		\langle (57), (68), (26)(48)(57), (15)(37)(68), \\(13), (24), (13)(26)(48), (15)(24)(37) 
		\rangle\simeq\D_4\times\D_4,
	\end{multline*}
	where $\D_4$ denotes the dihedral group of $8$ elements. 
\end{rem}

We conclude the paper with more counterexamples to the Gateva-Ivanova
conjecture.  These counterexamples are constructed as extensions of the
solution $(X,r)$ of Example~\ref{exa:counterexample}. The key ingredient is a
result of Ced\'o, Jespers and Okni{\'n}ski about liftings of multipermutation
solutions, see~\cite[Lemma 4]{MR3177933}.  This lemma states that if $Y\to Z$
is a surjective homomorphism of solutions and $Y$ is a multipermutation
solution then so is $Z$.

\medskip
We say that a cycle set $X$ is a \emph{multipermutation} cycle set if its
corresponding solution $(X,r_X)$ is a multipermutation solution. 

\begin{lem}
	\label{lem:CJO}
	Let $X$ be a finite square-free cycle set, let $S$ be a nonempty set and
	let $\alpha\in Z^2(X,S)$ with $\alpha_{x,x}(s,s)=s$ for all $x\in X$ and
	$s\in S$.  Assume that $X$ is not a multipermutation cycle set. Then
	$S\times_\alpha X$ is square-free and it is not a multipermutation cycle
	set.
\end{lem}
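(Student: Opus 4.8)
The plan is to treat the two assertions separately and lean on two results already available. For the square-freeness of $S\times_\alpha X$, I would simply invoke Remark~\ref{rem:dynamical}: since $X$ is square-free and the hypothesis supplies $\alpha_{x,x}(s,s)=s$ for all $x\in X$ and $s\in S$, that remark immediately gives that $S\times_\alpha X$ is square-free. No further computation is needed for this half.

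For the claim that $S\times_\alpha X$ is not a multipermutation cycle set, I would argue by contraposition, using the lifting result of Ced\'o, Jespers and Okni\'nski stated just above (\cite[Lemma 4]{MR3177933}): if $Y\to Z$ is a surjective homomorphism of solutions and $Y$ is a multipermutation solution, then so is $Z$. The first step is to observe that the canonical surjection $p_\alpha\colon S\times_\alpha X\to X$, $(s,x)\mapsto x$, is a homomorphism of cycle sets. Indeed, from the defining operation~\eqref{eq:extension} one has $p_\alpha((s,x)\cdot(t,y))=x\cdot y=p_\alpha(s,x)\cdot p_\alpha(t,y)$, and $p_\alpha$ is surjective because $S$ is nonempty.

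The next step is to promote $p_\alpha$ to a surjective homomorphism of the associated solutions. Under the correspondence~\eqref{eq:correspondence}, any cycle set homomorphism $f\colon A\to B$ induces a homomorphism of solutions $(A,r_A)\to(B,r_B)$: writing $*$ for the inverse operation in each cycle set, from $f(a\cdot c)=f(a)\cdot f(c)$ one checks that $b*a=c$ forces $f(b)*f(a)=f(c)=f(b*a)$, whence $(f\times f)\,r_A=r_B\,(f\times f)$, and surjectivity is clearly preserved. Applying this to $p_\alpha$ yields a surjective homomorphism of solutions from the solution attached to $S\times_\alpha X$ onto the solution attached to $X$.

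Finally I would conclude. If $S\times_\alpha X$ were a multipermutation cycle set, its solution would be a multipermutation solution, so by the Ced\'o--Jespers--Okni\'nski lemma applied to the surjection $p_\alpha$ the solution of $X$ would also be a multipermutation solution, i.e.\ $X$ would be a multipermutation cycle set, contradicting the hypothesis. Hence $S\times_\alpha X$ is not a multipermutation cycle set. The only step demanding genuine care, and thus the main obstacle, is the functoriality claim that a cycle set homomorphism induces a solution homomorphism preserving surjectivity; once that is in place, the result is a direct consequence of Remark~\ref{rem:dynamical} and the cited lemma.
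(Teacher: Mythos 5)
Your proposal is correct and follows essentially the same route as the paper: square-freeness via Remark~\ref{rem:dynamical}, and the failure of the multipermutation property via the canonical projection $S\times_\alpha X\to X$ inducing a surjective homomorphism of solutions combined with \cite[Lemma 4]{MR3177933}. The only difference is that you spell out (correctly) the verification that a cycle set homomorphism induces a homomorphism of the associated solutions, a step the paper leaves implicit.
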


\begin{proof}
	The dynamical extension $S\times_\alpha X$ of the cycle set $X$ is a
	square-free cycle set by Lemma~\ref{lem:dynamical} and
	Remark~\ref{rem:dynamical}. It induces a surjective homomorphism of solutions
	\[
		(S\times X)\times(S\times X)\to X\times X,\quad
		((s,x),(t,y))\mapsto (x,y).
	\]
	Thus the claim follows from~\cite[Lemma 4]{MR3177933}.
\end{proof}

\subsection*{Acknowledgement}

The author wishes to thank Ferran Ced\'o for comments on the paper, Travis
Schedler for the list of nondegenerate involutive set-theoretic solutions of
size $\leq 8$ and the ICTP for the support and for the working atmosphere.  

\def\cprime{$'$}
\begin{bibdiv}
\begin{biblist}

\bib{MR1994219}{article}{
      author={Andruskiewitsch, Nicol{\'a}s},
      author={Gra{\~n}a, Mat{\'{\i}}as},
       title={From racks to pointed {H}opf algebras},
        date={2003},
        ISSN={0001-8708},
     journal={Adv. Math.},
      volume={178},
      number={2},
       pages={177\ndash 243},
         url={http://dx.doi.org/10.1016/S0001-8708(02)00071-3},
      review={\MR{1994219 (2004i:16046)}},
}

\bib{MR2128041}{article}{
      author={Carter, J.~Scott},
      author={Elhamdadi, Mohamed},
      author={Saito, Masahico},
       title={Homology theory for the set-theoretic {Y}ang-{B}axter equation
  and knot invariants from generalizations of quandles},
        date={2004},
        ISSN={0016-2736},
     journal={Fund. Math.},
      volume={184},
       pages={31\ndash 54},
         url={http://dx.doi.org/10.4064/fm184-0-3},
      review={\MR{2128041 (2005k:57009)}},
}

\bib{MR2584610}{article}{
      author={Ced{\'o}, Ferran},
      author={Jespers, Eric},
      author={del R{\'{\i}}o, {\'A}ngel},
       title={Involutive {Y}ang-{B}axter groups},
        date={2010},
        ISSN={0002-9947},
     journal={Trans. Amer. Math. Soc.},
      volume={362},
      number={5},
       pages={2541\ndash 2558},
         url={http://dx.doi.org/10.1090/S0002-9947-09-04927-7},
      review={\MR{2584610 (2011i:16058)}},
}

\bib{MR2652212}{article}{
      author={Ced{\'o}, Ferran},
      author={Jespers, Eric},
      author={Okni{\'n}ski, Jan},
       title={Retractability of set theoretic solutions of the {Y}ang-{B}axter
  equation},
        date={2010},
        ISSN={0001-8708},
     journal={Adv. Math.},
      volume={224},
      number={6},
       pages={2472\ndash 2484},
         url={http://dx.doi.org/10.1016/j.aim.2010.02.001},
      review={\MR{2652212 (2011f:16093)}},
}

\bib{MR3177933}{article}{
      author={Ced{\'o}, Ferran},
      author={Jespers, Eric},
      author={Okni{\'n}ski, Jan},
       title={Braces and the {Y}ang-{B}axter equation},
        date={2014},
        ISSN={0010-3616},
     journal={Comm. Math. Phys.},
      volume={327},
      number={1},
       pages={101\ndash 116},
         url={http://dx.doi.org/10.1007/s00220-014-1935-y},
      review={\MR{3177933}},
}

\bib{MR2515811}{article}{
      author={Ceniceros, Jose},
      author={Nelson, Sam},
       title={Virtual {Y}ang-{B}axter cocycle invariants},
        date={2009},
        ISSN={0002-9947},
     journal={Trans. Amer. Math. Soc.},
      volume={361},
      number={10},
       pages={5263\ndash 5283},
         url={http://dx.doi.org/10.1090/S0002-9947-09-04751-5},
      review={\MR{2515811 (2010k:57021)}},
}

\bib{MR3190423}{article}{
      author={Chouraqui, Fabienne},
      author={Godelle, Eddy},
       title={Finite quotients of groups of {I}-type},
        date={2014},
        ISSN={0001-8708},
     journal={Adv. Math.},
      volume={258},
       pages={46\ndash 68},
         url={http://dx.doi.org/10.1016/j.aim.2014.02.009},
      review={\MR{3190423}},
}

\bib{MR3090121}{article}{
      author={Dehornoy, Patrick},
       title={Coxeter-like groups for set-theoretic solutions of the
  {Y}ang-{B}axter equation},
        date={2013},
        ISSN={1631-073X},
     journal={C. R. Math. Acad. Sci. Paris},
      volume={351},
      number={11-12},
       pages={419\ndash 424},
         url={http://dx.doi.org/10.1016/j.crma.2013.07.002},
      review={\MR{3090121}},
}

\bib{MR1183474}{incollection}{
      author={Drinfel{\cprime}d, V.~G.},
       title={On some unsolved problems in quantum group theory},
        date={1992},
   booktitle={Quantum groups ({L}eningrad, 1990)},
      series={Lecture Notes in Math.},
      volume={1510},
   publisher={Springer, Berlin},
       pages={1\ndash 8},
         url={http://dx.doi.org/10.1007/BFb0101175},
      review={\MR{1183474 (94a:17006)}},
}

\bib{MR1722951}{article}{
      author={Etingof, Pavel},
      author={Schedler, Travis},
      author={Soloviev, Alexandre},
       title={Set-theoretical solutions to the quantum {Y}ang-{B}axter
  equation},
        date={1999},
        ISSN={0012-7094},
     journal={Duke Math. J.},
      volume={100},
      number={2},
       pages={169\ndash 209},
         url={http://dx.doi.org/10.1215/S0012-7094-99-10007-X},
      review={\MR{1722951 (2001c:16076)}},
}

\bib{MR2095675}{article}{
      author={Gateva-Ivanova, Tatiana},
       title={A combinatorial approach to the set-theoretic solutions of the
  {Y}ang-{B}axter equation},
        date={2004},
        ISSN={0022-2488},
     journal={J. Math. Phys.},
      volume={45},
      number={10},
       pages={3828\ndash 3858},
         url={http://dx.doi.org/10.1063/1.1788848},
      review={\MR{2095675 (2005h:16077)}},
}

\bib{MR2927367}{article}{
      author={Gateva-Ivanova, Tatiana},
       title={Quadratic algebras, {Y}ang-{B}axter equation, and
  {A}rtin-{S}chelter regularity},
        date={2012},
        ISSN={0001-8708},
     journal={Adv. Math.},
      volume={230},
      number={4-6},
       pages={2152\ndash 2175},
         url={http://dx.doi.org/10.1016/j.aim.2012.04.016},
      review={\MR{2927367}},
}

\bib{MR2885602}{article}{
      author={Gateva-Ivanova, Tatiana},
      author={Cameron, Peter},
       title={Multipermutation solutions of the {Y}ang-{B}axter equation},
        date={2012},
        ISSN={0010-3616},
     journal={Comm. Math. Phys.},
      volume={309},
      number={3},
       pages={583\ndash 621},
         url={http://dx.doi.org/10.1007/s00220-011-1394-7},
      review={\MR{2885602}},
}

\bib{MR2368074}{article}{
      author={Gateva-Ivanova, Tatiana},
      author={Majid, Shahn},
       title={Set-theoretic solutions of the {Y}ang-{B}axter equation, graphs
  and computations},
        date={2007},
        ISSN={0747-7171},
     journal={J. Symbolic Comput.},
      volume={42},
      number={11-12},
       pages={1079\ndash 1112},
         url={http://dx.doi.org/10.1016/j.jsc.2007.06.007},
      review={\MR{2368074 (2009a:16078)}},
}

\bib{MR2383056}{article}{
      author={Gateva-Ivanova, Tatiana},
      author={Majid, Shahn},
       title={Matched pairs approach to set theoretic solutions of the
  {Y}ang-{B}axter equation},
        date={2008},
        ISSN={0021-8693},
     journal={J. Algebra},
      volume={319},
      number={4},
       pages={1462\ndash 1529},
         url={http://dx.doi.org/10.1016/j.jalgebra.2007.10.035},
      review={\MR{2383056 (2009a:16079)}},
}

\bib{MR2776789}{article}{
      author={Gateva-Ivanova, Tatiana},
      author={Majid, Shahn},
       title={Quantum spaces associated to multipermutation solutions of level
  two},
        date={2011},
        ISSN={1386-923X},
     journal={Algebr. Represent. Theory},
      volume={14},
      number={2},
       pages={341\ndash 376},
         url={http://dx.doi.org/10.1007/s10468-009-9192-z},
      review={\MR{2776789 (2012h:16072)}},
}

\bib{MR1637256}{article}{
      author={Gateva-Ivanova, Tatiana},
      author={Van~den Bergh, Michel},
       title={Semigroups of {$I$}-type},
        date={1998},
        ISSN={0021-8693},
     journal={J. Algebra},
      volume={206},
      number={1},
       pages={97\ndash 112},
         url={http://dx.doi.org/10.1006/jabr.1997.7399},
      review={\MR{1637256 (99h:20090)}},
}

\bib{MR2132760}{article}{
      author={Rump, Wolfgang},
       title={A decomposition theorem for square-free unitary solutions of the
  quantum {Y}ang-{B}axter equation},
        date={2005},
        ISSN={0001-8708},
     journal={Adv. Math.},
      volume={193},
      number={1},
       pages={40\ndash 55},
         url={http://dx.doi.org/10.1016/j.aim.2004.03.019},
      review={\MR{2132760 (2005k:81132)}},
}

\bib{MR2278047}{article}{
      author={Rump, Wolfgang},
       title={Braces, radical rings, and the quantum {Y}ang-{B}axter equation},
        date={2007},
        ISSN={0021-8693},
     journal={J. Algebra},
      volume={307},
      number={1},
       pages={153\ndash 170},
         url={http://dx.doi.org/10.1016/j.jalgebra.2006.03.040},
      review={\MR{2278047 (2007m:16065)}},
}

\bib{MR2442072}{article}{
      author={Rump, Wolfgang},
       title={Semidirect products in algebraic logic and solutions of the
  quantum {Y}ang-{B}axter equation},
        date={2008},
        ISSN={0219-4988},
     journal={J. Algebra Appl.},
      volume={7},
      number={4},
       pages={471\ndash 490},
         url={http://dx.doi.org/10.1142/S0219498808002904},
      review={\MR{2442072 (2010a:81109)}},
}

\end{biblist}
\end{bibdiv}

 
\end{document}